\newtheorem{theorem}{Theorem}[section]
\newtheorem{question}[theorem]{Question}
\newtheorem{conjecture}[theorem]{Conjecture}
\newcommand{\av}{\operatorname{Av}}
\newcommand{\Av}{\mathcal{AV}}
\date{\today} 
\newcommand*\linenomathpatch[1]{%
  \cspreto{#1}{\linenomath}%
  \cspreto{#1*}{\linenomath}%
  \cspreto{end#1}{\endlinenomath}%
  \cspreto{end#1*}{\endlinenomath}%
}
\begin{document}

\title[Sets of patterns with long monotone subsequences]{Permutations avoiding sets of patterns with long monotone subsequences}

\author{Mikl\'os B\'ona}
\thanks{This work was supported by a grant from the Simons Foundation (421967, MB)}
\address{Department of Mathematics, University of Florida, $358$ Little Hall, PO Box $118105$,
Gainesville, FL, $32611$-$8105$ (USA)}
\email{bona@ufl.edu}
\author{Jay Pantone}
\thanks{This work was supported by a grant from the Simons Foundation (713579, JP)}
\address{Department of Mathematical and Statistical Sciences, Marquette University, PO Box 1881, Milwaukee WI, 53201-1881 (USA)}
\email{jay.pantone@marquette.edu}
\date{}

\begin{abstract} We enumerate permutations that avoid all but one of the $k$ patterns of length $k$ starting with a  monotone increasing subsequence of length $k-1$.
We compare the size of such permutation classes to the size of the class of permutations avoiding the monotone increasing subsequence of length $k-1$. In most cases, 
we determine the exponential growth rate of these permutation classes, while in the remanining cases, we present strong numerical evidence leading to a  conjectured growth rate.
We also present numerical evidence that suggests a conjecture for  the growth rates of these permutation classes at subexponential precision. Some of these conjectures claim
that the relevant permutation classes have non-algebraic, and in one case, even non-D-finite, generating functions.
\end{abstract}

\maketitle

\section{Introduction}
\label{section:introduction}
 We say that a permutation $p$ {\em contains} the pattern (or subsequence) $q=q_1q_2\cdots q_k$ 
if there is a $k$-element set of indices $i_1<i_2< \cdots <i_k$ such that $p_{i_r} < p_{i_s} $ if and only
if $q_r<q_s$.  If $p$ does not contain $q$, then we say that $p$ {\em avoids} $q$. For example, $p=3752416$ contains
$q=2413$, as the first, second, fourth, and seventh entries of $p$ form the subsequence 3726, which is order-isomorphic
to $q=2413$.  A recent survey on permutation 
patterns by Vatter can be found in \cite{vatter}. Let  $\av_n(q)$ be 
the number of permutations of {\em length} $n$ that avoid the pattern $q$, where the {\em length} of a permutation is the number of entries in it. In general, it is very difficult to compute
the numbers  $\av_n(q)$, or to describe their sequence as $n$ goes to infinity.

However, the special case when $q$ is the monotone increasing pattern of length $k$ is much better understood. This is partly because the Robinson--Schensted correspondence 
maps $(12\cdots k)$-avoiding permutations of length $n$ into pairs of standard Young tableaux of the same shape, on $n$ boxes, and having at most $k-1$ columns. The number of such standard Young tableaux, and
the number of their pairs, was computed by Regev \cite{Regev} at great precision. He proved that  for all $k\geq 2$, the asymptotic equality 
 \begin{equation} \label{regeveq}  \av_n(1234\cdots k) \simeq \lambda_k \frac{(k-1)^{2n}}{n^{(k^2-2k)/2}} \end{equation}
 holds, where $\lambda_k$ is a constant given by a multiple integral. 

In particular, it follows from Regev's results that 
\begin{equation} \label{regevlight} L(12\cdots (k-1)) :=\lim_{n\rightarrow \infty} \left(\av_n (12\cdots (k-1)) \right)^{1/n} = (k-2)^2 ,\end{equation}
 a fact that we will also refer to by saying that the {\em exponential growth rate} of the sequence $\av_n(12\cdots (k-1))$ is
$(k-2)^2$. Note that (\ref{regevlight}) is much easier to prove than (\ref{regeveq}). See Theorem 4.10 in \cite{combperm} for an easy proof of the inequality
$\av_n(12\cdots (k-1))\leq (k-2)^{2n}$, which implies
$L(12\cdots (k-1))\leq (k-2)^2$, and see Lemma 5.3 of \cite{bona-lim} or Theorem 1.3 of \cite{merges-staircases} for different proofs of the inequality $L(12\cdots (k-1))\geq (k-2)^2$.

As monotone patterns are so well understood compared to other patterns, it is worth taking study of permutations that do not avoid 
the pattern  $12\cdots (k-1)$, but satisfy pattern avoidance conditions that significantly restrict the ways in which a permutation can contain
 $12\cdots (k-1)$. If $S$ is a set of patterns, and the permutation $p$ avoids all patterns in $S$, then we will say that $p$ \emph{avoids} $S$, and 
we will write $\av_n(S)$ for the number of such permutations of length $n$, $\Av(S)$ for the set of such permutations of all lengths (such a set is called a \emph{permutation class}) and $\Av_n(S)$ for those such permutations of length $n$. 

Let $A_k$ be the set of $k$ patterns of length $k$ that start with an increasing subsequence of length $k-1$. For instance, \[A_5=\{12345,12354,12453,13452,23451\}.\] 
Note that a permutation $p=p_1p_2\cdots p_n$ avoids   $A_k$ if and only if  the subsequence $p_1p_2\cdots p_{n-1}$ avoids 
$12\cdots (k-1)$. 
Therefore, \begin{equation} \label{Ak-eq} \av_n(A_k) = n \av_{n-1}(12\cdots (k-1)) .\end{equation}

If we remove one element of $A_k$, we find more interesting enumeration problems. Let $A_{k,i}=A_k \setminus \{12\cdots (i-1)(i+1)\cdots k i\}$, that is, the set $A_k$ 
with its element ending in $i$ removed. If $p$ avoids $A_{k,i}$, that means that if a pattern of length $k$ that is contained in $p$ starts with an increasing subsequence of length $k-1$,
then the last entry of that pattern has to be its $i$th largest entry. It is clear that for each $i\leq k$, the chain of inequalities $(k-2)^2 \leq L(A_{k,i}) \leq (k-1)^2$ holds, 
since if a permutation avoids the increasing pattern of length $k-1$, then it avoids $A_k$, and for all $i$, the set $A_{k,i}$ contains either the monotone pattern $12\cdots k$, 
or the pattern $12\cdots k(k-1)$, each of which are avoided by fewer than $(k-1)^{2n}$ permutations of length $n$. See Theorem 4.10 and Exercise 4.1 in \cite{combperm} for simple 
proofs of these upper bounds. The interesting question is {\em where} in the interval 
$[(k-2)^2,(k-1)^2]$ are the growth rates $L(A_{i,k})$ located.  

Our goal in this paper is to determine the exponential growth rate $L(A_{k,i})$ of the sequence $\av_n(A_{k,i})$, for each $i\leq k$. These growth rates fall into three categories, depending on what
$i$ is. For $2\leq i\leq k-1$, we prove that $L(A_{k,i})=(k-2)^2$, so avoiding $A_{k,i}$ is just as hard (in the exponential sense) as avoiding $12\cdots (k-1)$. For $i=k$, we prove that
 $L(A_{k,k})=(k-2)^2+1$. The case of $i=1$ is the most difficult. Note that in the case of $k=3$, the set of patterns to avoid is just $A_{3,1}=\{123,132\}$, and it is well
known (\cite{awalk}, Exercise 14.1) that $\av_n(123,132)=2^{n-1}$. So in this case, $L(A_{k,1})=(k-2)^2+1$. On the other hand, if $k=4$, then the set of patterns to avoid is
$A_{4,1}=\{1234, 1243, 1342\}$, and the generating function of permutations avoiding that set of patterns is given in \cite{callan} and could alternatively be computed using the $\mathcal{C}$-machine framework in~\cite{c-machines}. It follows from that generating function
that $L(A_{4,1})=2+\sqrt{5}\approx 4.236$.
We are unable to rigorously compute $L(A_{5,1})$, but we give extremely strong experimental evidence that $L(A_{5,1}) = 9$, corresponding in this $k=5$ case to $(k-2)^2$.

\section{When $2\leq i \leq k-1$}

If a permutation $p$ avoids $A_{k,i}$, but contains an increasing subsequence of length $k-1$, then the set of entries of $p$ that follow the last entry of that increasing subsequence
is very restricted. This leads to the following theorem. 

\begin{theorem} \label{mosti}
For all $k\geq 3$, and all $2\leq i \leq k-1$, the equality 
\[L(A_{k,i}) = (k-2)^2 \]
holds. 
\end{theorem}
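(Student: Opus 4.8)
The plan is to prove the matching upper bound $L(A_{k,i})\le(k-2)^2$; the reverse inequality is already recorded in the introduction. Split $\av_n(A_{k,i})=\av_n(12\cdots(k-1))+|\mathcal E_n|$, where $\mathcal E_n$ consists of the length-$n$ permutations avoiding $A_{k,i}$ that \emph{contain} $12\cdots(k-1)$. By Regev's theorem the first summand has exponential growth rate exactly $(k-2)^2$, so it suffices to show $\limsup_n|\mathcal E_n|^{1/n}\le(k-2)^2$.

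For $p\in\mathcal E_n$, let $m$ be least with $p_1\cdots p_m$ containing $12\cdots(k-1)$, and write $P=p_1\cdots p_m$ and $S=p_{m+1}\cdots p_n$. Minimality forces $p_m$ to be the maximum of every increasing subsequence of length $k-1$ inside $P$; hence for any such subsequence $a_1<\cdots<a_{k-1}=p_m$ and any entry $x$ of $S$, the word $a_1,\dots,a_{k-1},x$ begins with an increasing run of length $k-1$, and avoidance of $A_{k,i}$ forces it to be the single pattern of $A_k$ that was \emph{not} removed, i.e.\ $a_{i-1}<x<a_i$. (This is exactly where the hypothesis $2\le i\le k-1$ enters, so that both $a_{i-1}$ and $a_i$ are present; the degenerate case $k=3$, $i=2$ is handled directly.) Intersecting over all choices of the increasing subsequence, all entries of $S$ lie in a common value band $(\alpha,\beta)$, where $\alpha$ (resp.\ $\beta$) is the largest (resp.\ smallest) value that can serve as the $(i-1)$st (resp.\ $i$th) term of an increasing subsequence of length $k-1$ in $P$ ending at $p_m$. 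Applying the same reasoning to increasing subsequences that begin with $a_1,\dots,a_{i-1}$ and then continue through the band shows moreover that $S$ — indeed the restriction $w$ of $p$ to all of its band-valued entries — avoids $12\cdots(k-i+1)$, and in fact avoids $A_{k-i+1,1}$, a class whose growth rate is at most $(k-i)^2\le(k-2)^2$ by the chain of inequalities quoted in the introduction.

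These facts let me reconstruct $p$ from a bounded number of ``coordinates'': the index $m$ and the location of the band ($O(n^2)$ choices); the order type of the permutation obtained from $P$ by deleting its band-valued entries, which has all but its last entry avoiding $12\cdots(k-1)$ (at most $\mathrm{poly}(n)\cdot\av_{a}(12\cdots(k-1))$ choices for the appropriate length $a$); the order type of $w$ (at most $\av_{b}(A_{k-i+1,1})$ choices); and the placement of the band-valued entries of $P$ among the deleted slots. Summing the resulting bound over $m$ reduces the problem to estimating a convolution of $\av_a(12\cdots(k-1))$ with $\av_b(A_{k-i+1,1})$, whose growth rate is $\max\{(k-2)^2,(k-i)^2\}=(k-2)^2$. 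The crux — and the step I expect to be the real obstacle — is to verify that the last coordinate (the placement of the band-valued entries of $P$) costs only a polynomial factor: one must show that these entries occupy only a bounded number, depending on $k$, of positional windows cut out by a fixed length-$(k-1)$ increasing subsequence through $p_m$, and that within each window they form a monotone run, since otherwise two of them in increasing position and value would produce an increasing subsequence of length $k-1$ ending at $p_m$ whose $i$th term is smaller than $\beta$, contradicting the definition of $\beta$. Putting this together yields $\limsup_n|\mathcal E_n|^{1/n}\le(k-2)^2$, which together with the lower bound proves the theorem.
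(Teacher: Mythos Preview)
Your overall strategy is different from the paper's. The paper encodes each $p\in\Av_n(A_{k,i})$ by the pair of words $(w(p),z(p))$ over the alphabet $\{1,\dots,k-1\}$, where $w(p)_j$ is the rank of $p_j$ and $z(p)_t$ is the rank of the entry $t$; it then shows directly that each word has at most $\operatorname{poly}(n)\cdot(k-2)^n$ possibilities, using that all entries to the right of the first rank-$(k-1)$ entry lie in a single value interval $(a_{i-1},a_i)$. No splitting at a first occurrence, no convolution, and no ``placement'' step is needed.

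Your approach, by contrast, hinges on the placement step you yourself flag as the crux, and as written there is a genuine gap there. Two concrete problems:

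\textbf{(a) The number of band entries of $P$ can be linear in $n$.} Take $k=4$, $i=3$, and
\[
p \;=\; m-1,\,m-2,\,\dots,\,4,\,1,\,3,\,2,\,m.
\]
Then $m$ is the first position at which $123$ is completed, the unique increasing triples ending at $p_m$ have $a_2\in\{2,3\}$, so $\alpha=3$, $\beta=m$, and the band entries of $P$ are $4,5,\dots,m-1$, giving $r=m-4$. Thus $r$ is not bounded in terms of $k$.

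\textbf{(b) ``Monotone within each window'' does not make the placement polynomial.} Even if you prove that the $r$ band entries of $P$ are decreasing inside each of the $k-1$ positional windows determined by a fixed increasing subsequence, that constrains their \emph{relative order}, not their \emph{positions}. The datum you actually need---which $r$ of the $m$ positions of $P$ carry band values---still has $\binom{m}{r}$ possibilities, and with $r$ linear in $n$ this contributes an exponential factor. If you carry your bound through with this interleaving cost, you get growth at most $(k-2)^2+(k-i)^2$, which is strictly larger than $(k-2)^2$ for every $2\le i\le k-1$.

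There is also a secondary issue: your claim that the full band word $w$ (including the band entries of $P$, not just $S$) avoids $A_{k-i+1,1}$ relies on prepending $a_1,\dots,a_{i-1}$ to an increasing run in $w$, but when that run begins among the band entries of $P$ you have not arranged for $a_{i-1}$ to lie to its left. So either the placement step or this avoidance claim needs substantially more work before your convolution estimate can close the gap; the paper sidesteps both difficulties entirely with the rank-word encoding.
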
 

\begin{proof}  
Let $p=p_1p_2\cdots p_n \in \Av_n(A_{k,i})$. 
For any entry $p_h$ of $p$, let the {\em rank} of $p_h$ be the length of the longest increasing subsequence of $p$ that ends in $p_h$. 
 We define two words over the alphabet $\{1,2,\cdots ,k-1\}$. Let $w(p)$ be the word whose $h$th letter is the {\em rank} of $p_h$,
and let $z(p)$ be the word whose $h$th letter is the rank of $h$ as an entry in $p$. 

Note that $i<k$, so it follows that each $p\in \Av_n (A_{k,i})$ avoids the increasing pattern of length $k$, so all entries of $p$ 
have rank less than $k$. (This is not true when $i=k$, and that is why that case will have to be treated separately in Section~\ref{section:i-equals-k}.)
 Therefore, $w(p)$ and $z(p)$ will indeed be words over the mentioned
alphabet. Furthermore, the map $p\rightarrow (w(p),z(p))$ is injective, since entries of a fixed rank $j$ form a decreasing sequence, hence $p$ can be recovered from its image
$(w(p),z(p))$.

 Let $p\in \Av_n (A_{k,i})$, and again write $p=p_1p_2\cdots p_n$. 
Let us take a closer look at $w(p)$. Let $j$ be the smallest index such that $w(p)_j=k-1$. (If there is no such $j$, then $p$ avoids the increasing pattern $12\cdots (k-1)$, and so the number of possibilities for $p$ is less than $(k-2)^{2n}$ as we explained in Section~\ref{section:introduction} following equation (\ref{regevlight}).) That means that there is an increasing  subsequence of length $k-1$ of  $p$ ending in $p_j$.  Let $a_1<a_2<\cdots <a_{k-1}=p_j$ be such a subsequence. If there are several such subsequences, then choose the one such that $a_{k-2}$ is maximal, then $a_{k-3}$ is maximal, and so on. 
 Then for all entries $x$ on the right of $p_j$, the inequalities $a_{i-1}<x<a_i$ must hold, otherwise $a_1a_2\cdots a_{k-1}x$ is a forbidden pattern.
 That means that all such entries of $p$ are of rank $i$ or higher, 
so the last $n-j$ letters of $w(p)$ are $i$ or larger. Therefore, the number of possible words $w(p)$ is at most $\sum_{j=1}^n (k-2)^{j-1} (k-i)^{n-j} \leq n(k-2)^{n-1}$.
Note that in the last estimate, we used the fact that $i>1$.

Now consider $z(p)$. As $p_j$ is the leftmost entry  of $p$ that is of rank $k-1$, and all subsequent entries of $p$ are between $a_{i-1}$ and $a_i$ in value, it follows that 
all entries of $p$ that are of rank $k-1$ except for $p_j$ must be between $a_{i-1}$ and $a_i$ in value. Therefore, if $t\notin (a_{i-1},a_i)$, then the $t$th letter of $z(p)$ cannot
be $k-1$ (except once, the $p_j$th letter), while if $t\in (a_{i-1},a_i)$, then the $t$th letter of $z(p)$ cannot be  $i-1$. Indeed, let us assume the entry $t$ of $p$ is of rank $i-1$, 
and that $a_{i-1}<t<a_i$ holds. Then $t$ must be located on the left of $a_{i-1}$ (since entries of the same rank form a decreasing subsequence),
 and hence, on the left of $a_i$. So there is an increasing subsequence in $p$ that is of length $i$ and
whose last two entries are $t$ and $a_i$, contradicting the maximality requirement of the preceding paragraph. 
Therefore, setting $m=a_{i} - a_{i-1} -1$, we have fewer than $n^3 (k-2)^m (k-2)^{n-m-3} <n^3 (k-2)^{n-3} $ possibilities for $z(p)$. 
Indeed, once we know the locations of $p_j$, $a_{i-1}$ and $a_i$, we know that in those positions, $z(p)$ has entries $k-1$, $i-1$ and $i$, respectively. For each of the remaining
$n-3$ letters of $z(p)$, we have $k-2$ possibilities, because for some of them, $k-1$ is not a possibility, and for the rest of them, $i-1$ is not a possibility. 

This implies that the total number of possibilities for the pair $(w(p),z(p))$ is less than $n^4 (k-2)^{2n}$, which proves our claim as we have already shown in the introduction
that $L(A_{k,i} ) \geq (k-2)^2$.
\end{proof}

\section{When $i=k$}
\label{section:i-equals-k}
The case of $i=k$ leads to a different result. 
\begin{theorem} \label{largesti}  For $k\geq 3$, the equality 
\[L(A_{k,k})=(k-2)^2+1 \]
 holds.
\end{theorem}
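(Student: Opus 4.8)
The plan is to prove the matching bounds $L(A_{k,k})\ge (k-2)^2+1$ and $L(A_{k,k})\le (k-2)^2+1$, in each case by comparing $\av_n(A_{k,k})$ with the binomial transform $\sum_{m=0}^{n}\binom{n}{m}\av_m(12\cdots(k-1))$ of the sequence of $12\cdots(k-1)$-avoider counts; with $F$ the generating function of $12\cdots(k-1)$-avoiders, this transform has generating function $\frac{1}{1-x}\,F\!\big(\tfrac{x}{1-x}\big)$ and hence radius of convergence $\frac{1}{(k-2)^2+1}$. The only analytic inputs actually needed will be $\av_m(12\cdots(k-1))^{1/m}\to(k-2)^2$ (quoted from Regev in the introduction) for the lower bound, and Regev's cruder estimate $\av_m(12\cdots(k-1))\le C_k\,m^{D_k}(k-2)^{2m}$ for the upper bound.

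\emph{Lower bound.} The first step is the one-line observation that if $\alpha$ is any arrangement of a set of values that avoids $12\cdots(k-1)$ and $\beta$ is the \emph{increasing} arrangement of the remaining values, then $\alpha\beta\in\Av(A_{k,k})$: in any occurrence of a pattern of $A_{k,k}$ the first $k-1$ entries increase and the last one is smaller than the largest of them, so if that last entry lay in $\beta$ its $\beta$-predecessors would all be smaller than it, forcing the top of the increasing run — hence the whole length-$(k-1)$ run — into $\alpha$, a contradiction; while if it lay in $\alpha$ the whole pattern would lie in $\alpha$. Hence, for each $m$ with $0\le m\le n$, choosing an $m$-element set of values for the first $m$ positions, arranging it there to avoid $12\cdots(k-1)$, and listing the remaining values in increasing order afterwards gives $\binom{n}{m}\av_m(12\cdots(k-1))$ distinct members of $\Av_n(A_{k,k})$, so $\av_n(A_{k,k})\ge\frac{1}{n+1}\sum_{m}\binom{n}{m}\av_m(12\cdots(k-1))$. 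Since $\av_m(12\cdots(k-1))\ge(k-2-\varepsilon)^{2m}$ for large $m$ and fixed $\varepsilon>0$, the binomial theorem gives $\sum_m\binom{n}{m}\av_m(12\cdots(k-1))\ge\big((k-2-\varepsilon)^2+1\big)^n(1-o(1))$; letting $\varepsilon\to 0$ yields $L(A_{k,k})\ge(k-2)^2+1$.

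\emph{Upper bound.} The first step is a structural dichotomy: $p\in\Av_n(A_{k,k})$ either avoids $12\cdots(k-1)$, or — letting $t$ be least with $p_1\cdots p_t$ containing $12\cdots(k-1)$ and fixing an increasing subsequence $a_1<\cdots<a_{k-1}=p_t$ within $p_1\cdots p_t$ — one first shows every entry after position $t$ exceeds $p_t$ (else appending it to $a_1,\dots,a_{k-1}$ realizes a pattern of $A_{k,k}$) and then, on replacing $a_1<\cdots<a_{k-1}$ by $a_2<\cdots<a_{k-1}<p_{t+1}$ and iterating, that $p_{t+1}<p_{t+2}<\cdots<p_n$. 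So $p$ is then recovered from the data $\big(t,\ S,\ p_1\cdots p_t\big)$ with $S$ the value set of the first $t$ positions, and moreover $S$ is forced to contain $\{1,\dots,p_t\}$ (every value below $p_t$ lies among the first $t$ positions) and $p_t<\min(\overline S)$. The second step counts these data: for fixed $t$, with $\ell:=\min(\overline S)-1$, there are at most $\binom{n-\ell-1}{t-\ell}$ admissible sets $S$, and for each at most $\ell\cdot\av_{t-1}(12\cdots(k-1))$ admissible arrangements $p_1\cdots p_t$ (its last entry is one of $1,\dots,\ell$, and its first $t-1$ entries avoid $12\cdots(k-1)$). Collapsing the sum over $\ell$ by the hockey-stick identity, inserting Regev's bound, and collapsing the sum over $t$ by $\sum_t\binom{n}{t}(k-2)^{2t}=\big((k-2)^2+1\big)^n$ gives $\av_n(A_{k,k})\le\mathrm{poly}(n)\cdot\big((k-2)^2+1\big)^n$, so $L(A_{k,k})\le(k-2)^2+1$.

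The substantive part, and the only real obstacle, is the structural dichotomy — in particular the two features that make the resulting count a genuine binomial transform rather than something carrying a spurious extra exponential: the inductive argument forcing the tail $p_{t+1}\cdots p_n$ to be increasing, and the fact that the prefix value set $S$ must contain \emph{every} value below $p_t$ (so that $S$ ranges only over the few sets counted by $\binom{n-\ell-1}{t-\ell}$, not over an unrestricted family of size $2^n$). Once those are settled, the lower and upper estimates are routine, they match, and we conclude $L(A_{k,k})=(k-2)^2+1$.
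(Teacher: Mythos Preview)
Your proof is correct, and it rests on the same structural observation as the paper's: once $p$ reaches its leftmost entry of rank $k-1$ (your position $t$), the remainder $p_t p_{t+1}\cdots p_n$ must be increasing, and every value below $p_t$ already sits among the first $t$ positions. Where the two arguments diverge is in how they exploit this structure. The paper passes through the Robinson--Schensted correspondence: the increasing tail forces the last $n-t$ cells of both tableaux into the first row, with the tail of $Q(p)$ completely determined, so the upper bound collapses immediately to $\sum_i\binom{n}{n-i}(k-2)^{2i}=((k-2)^2+1)^n$, and the lower bound is obtained by exhibiting a family of tableau pairs satisfying three explicit shape-and-content conditions and invoking Regev's lower bound on $I_m(12\cdots(k-1))$. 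You instead work entirely at the level of permutations: for the lower bound you build $\alpha\beta$ with $\alpha$ a $12\cdots(k-1)$-avoider and $\beta$ increasing, and for the upper bound you count the data $(t,\ell,S,p_1\cdots p_t)$ directly, using the hockey-stick identity and Regev's polynomial-times-$(k-2)^{2m}$ upper bound on $\av_m(12\cdots(k-1))$ to reduce to the same binomial sum. Your route is more elementary and self-contained (no RSK, no involution counts), at the cost of a slightly fussier upper-bound computation; the paper's RSK packaging gives a cleaner one-line upper bound and ties the result to tableau asymptotics, but requires the reader to know RSK and its descent-preservation property. Both arrive at precisely the same binomial transform of the $12\cdots(k-1)$-avoider sequence.
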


In this section, we will assume that the reader is familiar with the Robinson--Schensted correspondence. Readers who wish to learn about that correspondence can consult Chapter 3 of 
\cite{sagan} for a thorough introduction, or Section 7.1 of \cite{combperm} for a survey of some relevant facts. 

\begin{proof} Let $p=p_1p_2\cdots p_n \in \Av_n(A_{k,k})$, and let $P(p)$ and $Q(p)$ be the $P$-tableau and $Q$-tableau of $p$, obtained by the Robinson--Schensted correspondence. 
If $p_{i+1}$ is the leftmost entry of $p$ that is of rank $k-1$, then $p_{i+1}p_{i+2} \cdots p_n$ must be an increasing subsequence. 
This means that the last $n-i$ positions that are filled in both tableaux are the $(k-1)$th, $k$th, $\cdots$, last positions of the first row. 
In particular, this implies that in $Q(p)$, these positions are filled with the entries $i+1,i+2,\cdots ,n$. 

Note that this means that $P(p)$ and $Q(p)$ consist of two parts. One part consists of the first $k-2$ columns, which we will call the {\em front}, and the remaining columns, which are all of height one. We will call this second part the {\em tail}.  As we said above, in $Q(p)$, the content of this second part is known. 
Therefore, to specify $p$, it suffices to select the content of the  tail of $P(p)$ in ${n\choose n-i}$ ways, then select the front of $P(p)$ and $Q(p)$ in 
at most $(k-2)^{2i}$ ways. This leads to the upper bound 

\begin{eqnarray*} 
\av_n(A_{k,k}) \leq \sum_{i=k-2}^{n-1}  {n\choose n-i} (k-2)^{2i} \\
\leq  \sum_{i=0}^{n}  {n\choose n-i} (k-2)^{2i}  \leq \left ( (k-2)^2 +1 \right )^n  .
\end{eqnarray*}

We still have to show that the exponential order of the sequence $\av_n(A_{k,k})$ is at least $(k-2)^2+1$. In order to do so, we construct $A_{k,k}$-avoiding permutations of length $n$
as follows. We choose an integer $\ell $ such that $0\leq \ell \leq n$. Then we select an $\ell$-element subset $S$ of $[n]$. Next, we select a permutation $\pi$ on $[n]-S$ that avoids
$12\cdots (k-1)$, and we postpend $\pi$ with the entries of $S$, written in increasing order, to get the permutation $p$. Note that $p$ avoids $A_{k,k}$. Indeed, patterns in $A_{k,k}$ 
increase until they reach an entry of rank $k-1$, then decrease. This is not possible in $p$, since the only entries of rank $k-1$ or higher are in the last $\ell$ positions, and $p$ is increasing in all those positions. 

For a given $\ell$, the number of ways in which we can carry out the above steps is ${n\choose \ell} \av_{n-\ell}(12\cdots (k-1))$. For a given choice of $\ell$, each permutation $p$ will
be obtained at most once, so we will obtain at least
\[\frac{1}{n+1} \sum_{\ell=0}^n {n\choose \ell} \av_{n-\ell}(12\cdots (k-1)) \] different permutations of length $n$ that avoid $A_{k,k}$. The division by $n+1$ is necessary
because different choices of $\ell$ can lead to the same $p$. 

Finally note that it follows from (\ref{regeveq}), substituting $k-1$ in the place of $k$, that there exists a constant $K_{k-1}$ such that for all positive integers $n$, the inequality
\[\av_n(12\cdots (k-1))\geq K_{k-1} \frac{(k-2)^{2n}}{n^{(k^2-4k+3)/2}} \] holds. Comparing the last two displayed expressions, we see that we have constructed at least
\[\frac{K_{k-1}}{n^{(k^2-4k+3)/2}(n+1)} \sum_{\ell =0}^n {n\choose \ell} (k-2)^{2(n-\ell)} = \frac{K_{k-1}}{n^{(k^2-4k+3)/2}(n+1)} ((k-2)^2+1 )^n\] elements of
$\Av_n(A_{k,k})$, proving that the exponential order of $\av_n(A_{k,k})$ is at least $(k-2)^2+1$. 
\end{proof}

\section{An injection}
\label{section:injection}
It follows from Theorems \ref{mosti} and \ref{largesti} that if $n$ is large enough, then $\av_n(A_{k,k-1}) < \av_n(A_{k,k})$. 
In this section, we prove that  $\av_n(A_{k,k-1}) \leq  \av_n(A_{k,k})$ for {\em every} $n$. The interest of this result lies in its proof, which is
 by a very simple injective map. It is rare that nontrivial inequalities between permutation class sizes can be proved injectively. 

\begin{theorem} For all positive integers $n$, and all $k\geq 3$, the inequality 
\[\av_n(A_{k,k-1}) \leq  \av_n(A_{k,k}) \]
holds. 
\end{theorem}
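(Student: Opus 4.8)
The plan is to produce an explicit injection $\Phi:\Av_n(A_{k,k-1})\to\Av_n(A_{k,k})$ that simply reverses a canonically chosen suffix of each permutation.

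Fix $p\in\Av_n(A_{k,k-1})$. Since $12\cdots k\in A_{k,k-1}$, the permutation $p$ has no increasing subsequence of length $k$, so (with ``rank'' of an entry as in the proof of Theorem~\ref{mosti}) every entry of $p$ has rank at most $k-1$, and $p$ contains $12\cdots(k-1)$ precisely when it has an entry of rank $k-1$. If it has none, then $p$ avoids $A_k$ and hence $A_{k,k}$, and I set $\Phi(p)=p$. Otherwise let $p_j$ be the leftmost entry of $p$ of rank $k-1$, and fix an increasing subsequence $a_1<a_2<\cdots<a_{k-2}<a_{k-1}=p_j$ in $p$. I first record the structure to the right of $p_j$. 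For any entry $x$ of $p$ lying after $p_j$, the subsequence $a_1a_2\cdots a_{k-1}x$ is a pattern of $A_k$ (its first $k-1$ entries increase), and since the only pattern of $A_k$ not forbidden to $p$ is $12\cdots(k-2)k(k-1)$, the entry $x$ must be the second largest of these $k$ entries; that is, $a_{k-2}<x<p_j$. Moreover $p_{j+1}p_{j+2}\cdots p_n$ is strictly decreasing: if $p_s<p_t$ with $j<s<t$ then, since $p_s>a_{k-2}$, the subsequence $a_1\cdots a_{k-2}p_sp_t$ is an increasing subsequence of length $k$ in $p$, which is impossible. Hence $p_j>p_{j+1}>\cdots>p_n$. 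Finally, $a_1<\cdots<a_{k-2}$ is an increasing subsequence of $p_1\cdots p_{j-1}$ ending at the value $a_{k-2}<p_n$, whereas $p_1\cdots p_{j-1}$ has no increasing subsequence of length $k-1$ at all, because $p_j$ is the \emph{leftmost} entry of rank $k-1$.

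Now let $\Phi(p)$ be the permutation obtained from $p$ by reversing the block in positions $j,j+1,\dots,n$, so that $\Phi(p)=p_1\cdots p_{j-1}\,p_np_{n-1}\cdots p_{j+1}p_j$. Since $p_j>p_{j+1}>\cdots>p_n$, the entries of $\Phi(p)$ in positions $\ge j$ form the strictly increasing run $p_n<p_{n-1}<\cdots<p_j$. I claim $\Phi(p)\in\Av_n(A_{k,k})$. If $\Phi(p)$ contained a pattern of $A_{k,k}=A_k\setminus\{12\cdots k\}$, it would contain an increasing subsequence $c_1<c_2<\cdots<c_{k-1}$ together with a later entry $x$ satisfying $x<c_{k-1}$. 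If $c_{k-1}$ occurred in a position $\ge j$ it would lie in the increasing run, so every entry of $\Phi(p)$ after it would exceed it, contradicting $x<c_{k-1}$; hence $c_1,\dots,c_{k-1}$ all occur in positions $\le j-1$, contradicting the fact that $\Phi(p)_1\cdots\Phi(p)_{j-1}=p_1\cdots p_{j-1}$ has no increasing subsequence of length $k-1$. Therefore $\Phi(p)\in\Av_n(A_{k,k})$.

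Finally, $\Phi$ is injective because it is inverted by the same recipe. The leftmost entry of rank $k-1$ in $\Phi(p)$ is again in position $j$: its first $j-1$ entries equal $p_1\cdots p_{j-1}$, hence all have rank $\le k-2$, while $\Phi(p)_j=p_n$ has rank exactly $k-1$ in $\Phi(p)$ — it is $\le k-1$ since an increasing subsequence ending there uses only the first $j$ entries and $p_1\cdots p_{j-1}$ contributes length at most $k-2$, and it is $\ge k-1$ since $a_1<\cdots<a_{k-2}<p_n$ is an increasing subsequence of length $k-1$ of $\Phi(p)$ ending at position $j$. Thus, when $p$ is in the second case, reversing the suffix of $\Phi(p)$ that begins at $\Phi(p)$'s leftmost rank-$(k-1)$ entry returns $p$; when $p$ is in the first case, $\Phi(p)=p$ still avoids $12\cdots(k-1)$, whereas all images of the second kind contain $12\cdots(k-1)$, so the two families of images are disjoint and the recovery rule (do nothing when there is no rank-$(k-1)$ entry, otherwise reverse the indicated suffix) inverts $\Phi$ in both cases. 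Hence $\av_n(A_{k,k-1})\le\av_n(A_{k,k})$. I expect the one point needing care to be the structural claim that everything to the right of the leftmost rank-$(k-1)$ entry forms a decreasing run of values in the interval $(a_{k-2},p_j)$; granting this, both the membership $\Phi(p)\in\Av_n(A_{k,k})$ and the invertibility of $\Phi$ are short increasing-subsequence arguments.
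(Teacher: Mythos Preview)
Your proof is correct and uses essentially the same injection as the paper: reverse the suffix of $p$ starting at its leftmost entry of rank $k-1$. The only cosmetic difference is in the injectivity argument---the paper locates the pivot by examining the maximal increasing run at the end of the image and picking out its unique entry of rank $k-1$, whereas you show directly that the leftmost rank-$(k-1)$ entry of $\Phi(p)$ sits at the same position $j$; both identify the same pivot, and your version is arguably tidier.
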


\begin{proof}
Let $p\in \Av _n (A_{k,k-1})$.  Let $p_i$ be the leftmost entry of $p$ that is of rank $k-1$ if such an entry $p_i$ exists. Then the entries $p_{i+1},p_{i+2},\cdots ,p_n$ must all be of rank $k-1$, and therefore, 
the subsequence $p_ip_{i+1}\cdots p_n$ is decreasing. Now we define a map $f: \Av _n (A_{k,k-1}) \rightarrow  \Av _n (A_{k,k})$ by setting $f(p)=p$ if $p$ does not have an entry of 
rank $k-1$, and $f(p)=p_1p_2\cdots p_{i-1}p_np_{n-1} \cdots p_i$ otherwise. In other words, $f(p)$ is obtained by reversing the decreasing subsequence $p_ip_{i+1}\cdots p_n$ of
$p$, that consists of entries of rank $k-1$ in $p$. 

It is then clear that $f(p)$ avoids $A_{k,k}$, since the only entries of rank $k-1$ or higher in $f(p)$ are those in the last $n - i + 1$ positions, and those entries are all in increasing order. 
Furthermore, $f$ is injective, since given $r\in \Av_n(A_{k,k})$, we can look at the maximal (non-extendible)  increasing subsequence at the end of $r$. That subsequence contains exactly one entry 
$x$ of rank $k-1$. The only preimage of $r$ under $f$ can be obtained by reversing the subsequence of $r$ that starts in $x$ and goes all the way to the end of $r$. 
(Note that $f$ is not a bijection, because reversing that subsequence of $r$ will not always result in a permutation in $ \Av _n (A_{k,k-1})$.)
\end{proof}

\section{When $i=1$}
\label{section:i-equals-1}

While we are not able to rigorously compute $L(A_{5,1})$, in this section we describe how to instead rigorously compute the first $642$ terms of the counting sequence of $A_{5,1}$-avoiding permutations, from which we derive very strong numerical evidence that $L(A_{5,1}) = 9$.

\begin{conjecture} The equality $L(A_{5,1}) = 9$ holds. \end{conjecture}

The Combinatorial Exploration paradigm developed by Albert, Bean, Claesson, Nadeau, Pantone, and Ulfarsson ~\cite{comb-exp} is a computational framework for enumerating combinatorial objects\footnote{All of the relevant code is open-source and can be found on GitHub~\cite{comb-exp-code}.}. Combinatorial Exploration is experimental in the sense that you do not know ahead of time whether it will succeed. However, when it does succeed, the output is a fully rigorous structural description of the class in the form of a \emph{combinatorial specification}. From this combinatorial specification, the framework automatically produces a polynomial-time counting algorithm for the class, a system of equations that the generating function for the class must satisfy, as well as other products that are not relevant here.

When applied to the permutation class $A_{5,1}$, Combinatorial Exploration finds a combinatorial specification in a few minutes, and a more favorable combinatorial specification in a few hours.\footnote{It is often the case that one can spend additional computing time to find a combinatorial specification whose polynomial-time counting algorithm has a lower polynomial degree, and is thus considerably faster.} The system of equations involves a main variable $x$ and two additional ``catalytic'' variables $y$ and $z$, and we do not know of any methods to solve it exactly, nor to extract from it any information about the asymptotic behavior of its solution. We used the resulting polynomial-time algorithm to compute the number of permutations of length $n$ in $A_{5,1}$ for $n \leq 641$ in about 20 hours. These initial terms of the counting sequence can be experimentally analyzed in several ways.

Firstly, we can use them to attempt to make a conjecture about the generating function of the counting sequence $\av_n(A_{5,1})$. There are many software packages that do this kind of computation (e.g., \texttt{Gfun}~\cite{gfun} in Maple), all with various strengths and weaknesses. We used a package called \texttt{GuessFunc}~\cite{guessfunc-bibtex} written in Maple by the second author. This package tries to fit the given initial terms to a rational, algebraic, D-finite, or differentially algebraic generating function; briefly, a generating function $f(x)$ is D-finite if it satisfies a non-trivial linear differential equation
\[
	p_k(x)f^{(k)}(x) + p_{k-1}(x)f^{(k-1)}(x) + \cdots + p_0(x)f(x) + q(x) = 0
\]
where the coefficients $p_i(x)$ and $q(x)$ are polynomials, and $f(x)$ is differentially algebraic if there is a polynomial $P$ such that
\[
	P(x, f(x), f'(x), \ldots, f^{(k)}(x)) = 0.
\]
\texttt{GuessFunc} works, roughly, by assuming that the generating function has a particular form (e.g., D-finite of differential order $3$ with polynomial coefficients of degree $12$), and using the known initial terms of the counting sequence to set up a corresponding linear system of equations. If that system has a solution, that solution leads to a conjectured generating function.

Using the 642 initial terms, we were unable to make a conjecture that the generating function of $A_{5,1}$ has any of these forms. While this is not dispositive, it implies that if $A_{5,1}$ were D-finite, for example, either the differential order or the maximum degree of one of the polynomial coefficients would need to be quite large.

\begin{conjecture}
	The generating function for $\Av(A_{5,1})$ is not D-finite.
\end{conjecture}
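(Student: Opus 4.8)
A D-finite power series is holomorphic on all of $\mathbb{C}$ except at the finitely many zeros of the leading polynomial coefficient of a defining linear ODE, so in particular it has only finitely many singularities. The plan is therefore to prove that $F(x)=\sum_{n\ge 0}\av_n(A_{5,1})\,x^n$ has its circle of convergence $|x|=1/\mu$ as a \emph{natural boundary}, where $\mu=L(A_{5,1})$ (conjecturally $9$); that alone implies the conjecture. The first step is to produce an explicit functional equation, or finite system of equations in one or more catalytic variables, satisfied by $F$. The combinatorial specification found by Combinatorial Exploration already encodes such a system, and the task is to translate it into analytically usable form and eliminate auxiliary statistics, arriving at something of kernel-method shape $K(x,u)\,F(x,u)=R\big(x,u,\{F(x,\sigma_\ell(x,u))\}_\ell\big)$ with $R$ polynomial and the $\sigma_\ell$ explicit algebraic substitutions of the catalytic variable. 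Part of what makes $i=1$ harder than the cases $2\le i\le k-1$ and $i=k$ treated above is precisely that the structure left after the first rank-$(k-1)$ entry is no longer merely an increasing run but can itself be recursively constrained, so this step is substantive rather than routine.

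The second step is to solve by the kernel method and iterate. Cancelling the kernel at its roots $u=\rho(x)$ and resubstituting recursively expresses $F$ as a formally convergent nested expansion whose depth-$n$ layer acquires singularities at the pullbacks, through $n$ steps of a rational self-map $\varphi$ of $x$ built by composing the substitutions, of a fixed ``seed'' singularity of the depth-$0$ term. One then argues that the closure of $\bigcup_{n\ge 0}\varphi^{-n}(\{\mathrm{seed}\})$ is all of $|x|=1/\mu$, so that $F$ admits no analytic continuation across any arc of the circle.

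The step I expect to be the main obstacle is making this last point rigorous: one must prove simultaneously that (i) those candidate singularities are dense on the circle, and (ii) they are not removed by cancellation, i.e. the nested expansion does not telescope or otherwise conspire to be analytic across $|x|=1/\mu$. Ruling out such a ``miraculous cancellation'' typically requires a positivity or monotonicity argument on the coefficients of the nested expansion, or an explicit lower bound showing that each new layer contributes a genuine (algebraic or logarithmic) branch point at each of its new points; in problems with this structure this is the part that resists automation.

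As a fallback that at least isolates the obstruction, one could instead try to establish the coefficient asymptotics directly: since a D-finite sequence must satisfy $\av_n(A_{5,1})\sim\mu^{\,n}\sum_i c_i\,n^{\alpha_i}(\log n)^{\beta_i}$ with all $\alpha_i\in\mathbb{Q}$ and $\beta_i\in\mathbb{Z}_{\ge 0}$, it would suffice to prove that the true asymptotics carries a forbidden correction — an irrational power of $n$, or a subexponential factor such as $\av_n(A_{5,1})=\mu^{\,n}\exp\!\big(-(c+o(1))\sqrt n\,\big)$ for some $c>0$, the latter being the typical companion of a natural boundary and reminiscent of the conjectural behaviour of $\av_n(1324)$. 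Proving any such precise asymptotic is itself the hard part, but the $642$ rigorously computed terms can be used both to identify the correct conjectural form and to calibrate the estimates in either approach.
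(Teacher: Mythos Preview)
The paper does not prove this statement at all: it is stated as a \emph{conjecture}, supported only by the failure of \texttt{GuessFunc} to fit the $642$ computed terms to a D-finite recursion. There is no argument in the paper to compare against; your proposal is an attempt to prove something the authors explicitly leave open.

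As a proof plan, the natural-boundary route is a reasonable template, but it rests on an unverified structural assumption: that the combinatorial specification produced by Combinatorial Exploration translates into a kernel equation whose catalytic substitutions compose into a rational self-map $\varphi$ with the right dynamics (preimages of a seed accumulating on the whole circle $|x|=1/\mu$). Specifications coming from this framework are typically large polynomial systems in many catalytic variables, and there is no a priori reason they reduce to the single-iterate shape your argument needs. Until you actually extract and display such an equation, steps one and two are wishes rather than steps.

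Your fallback has a more concrete problem. The paper's own differential-approximation analysis (Section~\ref{section:i-equals-1}) gives $\av_n(A_{5,1})\sim C\cdot 9^n n^{-3}$, with a rational exponent and no visible logarithmic or stretched-exponential correction. That form is perfectly consistent with D-finiteness, so even a rigorous proof of exactly these asymptotics would not rule it out. Invoking an irrational power of $n$ or a factor like $\exp(-c\sqrt{n})$ directly contradicts the numerical evidence you are meant to be leaning on; if the conjecture is true, it is not for that reason.
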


Secondly, and more relevant to the pursuits of this work, we can apply the method of differential approximation~\cite{diff-approx, DiffApprox-bibtex} to empirically estimate the asymptotic growth of the counting sequence. The method of differential approximation constructs a collection of D-finite generating functions whose initial power series coefficients match the known initial terms of the given counting sequence (later terms are not expected to match). Then, the asymptotic behaviors of the D-finite generating functions are studied in aggregate in order to make predictions about the asymptotic behavior of the counting sequence in question. When tested on sequences whose asymptotic growth is independently already known, differential approximation shows a remarkable ability to provide very precise estimates.

Using the first $200$ terms of the counting sequence of $A_{5,1}$, differential approximation predicts that the dominant singularity of its generating function (that is, the one closest to the origin) is located at
\[
	x_c \approx 0.1111111112
\]
indicating an exponential growth rate of $1/x_c \approx 9$ with very high precision. Further, it approximates the value of the corresponding critical exponent (a property of a given singularity) to be
\[
	\alpha \approx 1.9999999990
\]
indicating a polynomial term of $n^{-1-\alpha} \approx n^{-3}$. As a result, we have strong  experimental evidence for the following. 

\begin{conjecture}
	There exists a constant $C$ such that 	
	\[
		\av_n(A_{5,1}) \sim  C\cdot9^nn^{-3}.
	\]
	The value of $C$ appears to be roughly $0.47$.
\end{conjecture}
Such asymptotic growth, if verified, would not rule out the possibility that the generating function could be D-finite.

\section{Further Experimental Results}
\label{section:further-experimental}

In Section~\ref{section:i-equals-1}, we presented experimental evidence that the asymptotic growth of $A_{5,1}$ has the form $C \cdot 9^nn^{-3}$ and that the generating function of $A_{5,1}$ is non-D-finite. Combinatorial Exploration successfully produces combinatorial specifications for the other four classes of interest for $k=5$, allowing us to compute many initial terms of the counting sequences. In this section we quickly summarize the experimental results we find for $A_{5,2}$, $A_{5,3}$, $A_{5,4}$, and $A_{5,5}$, each of which now has a known exponential growth rate due to the previous sections, as well as for $A_{6,1}$.

\subsection{$A_{5,2}$}

Combinatorial Exploration produces a combinatorial specification for the permutation class $A_{5,2}$ in about $5$ hours. The resulting polynomial-time enumeration algorithm is slower than the one we found for $A_{5,1}$. We are only able to compute $91$ terms in the counting sequence in about $5$ hours using $300$gb of memory. \texttt{GuessFunc} provides no conjecture for the generating function of this sequence.  However, based on differential approximation, we make the following conjecture.

\begin{conjecture} 
There exists a constant $C$ such that \[
\av_n (A_{5,2}) \sim C \cdot 9^n n^{-3}.
\]
\end{conjecture}
Differential approximation also shows a subdominant singularity (i.e., a singularity that is not a singularity closest to the origin) in the area of $x \approx 0.18750 = 3/16$. In future subsections, we will only mention subdominant singularities in cases where they are detected.

\subsection{$A_{5,3}$}

For $A_{5,3}$, we are able to calculate the first $130$ terms of the counting sequence. \texttt{GuessFunc} is unable to produce a conjecture for the generating function of $A_{5,3}$. Based again on differential approximation, we make the following conjecture.

\begin{conjecture} 
There exists a constant $C$ such that \[
\av_n(A_{5,3}) \sim	C \cdot 9^n n^{-3}.
\]
\end{conjecture}

This time, differential approximation  suggests a subdominant singularity in the area of $x \approx 0.2$.

\subsection{$A_{5,4}$}

In this case, we can compute the first $444$ terms of the counting sequence in 13 hours, using 182gb of memory. Unlike the previous cases, \texttt{GuessFunc} predicts using the first $160$ terms that the generating function of $A_{5,4}$ is D-finite with differential order $6$ and maximum polynomial degree $17$. (We should note here that a D-finite generating function often satisfies many different linear differential equations, and that there is normally a tradeoff in which lowering the differential order results in a higher polynomial degree, and vice versa.) We will not reproduce the differential equation here due to its size. We should note that the output of \texttt{GuessFunc} is merely a conjectured generating function, although we have a high degree of confidence in it because it was found using only the first $160$ terms, and then matched nearly $300$ additional terms. Conjectured generating functions can sometimes be rigorously confirmed using a ``guess-and-check'' approach if other information is already rigorously known, but that is not the case here.

Using differential approximation once again, we predict an  asymptotic growth of the following form.

\begin{conjecture}
There exists a constant $C$ such that 
\[
	\av(A_{5,4}) \sim C \cdot 9^nn^{-3}.
\]
\end{conjecture}

\subsection{$A_{5,5}$}

For $A_{5,5}$ we have found the first $425$ terms of the counting sequence using about 6.5 hours and 107gb of memory. Like the previous case, \texttt{GuessFunc} conjectures that the generating function is D-finite, this time with differential order $3$ and maximum polynomial degree $8$ and using only the first $55$ terms. This one is small enough to print: the generating function $F(x)$ appears to satisfy the equation
\begin{align*}
	&x^3(x-1)(5x-2)(10x-1)(2x-1)^2F'''(x)\\
	&\quad + x^2(2x-1)(650x^4 - 1375x^3 + 909x^2 - 227x+16)F''(x)\\
	&\quad + x(2x-1)(800x^4 - 1850x^3 + 1277x^2 - 339x +28)F'(x)\\
	&\quad + (200x^5 - 700x^4 + 716x^3 - 329x^2 + 76x -8)F(x)\\
	&\quad + 2(5x-2)^2 = 0
\end{align*}

Differential approximation predicts an asymptotic growth of the following form.

\begin{conjecture} There exists a constant $C$ such that 
\[
	 \av_n(A_{5,5}) \sim  C \cdot 10^nn^{-4}.
\]
\end{conjecture}

The presence of the $10x-1$ factor in the coefficient of $F'''(x)$ in the conjectured differential equation indicates the possibility (but not the certainty) of an exponential growth rate of $10$ for the counting sequence, although we know already from Theorem~\ref{largesti} that the growth rate is indeed $10$.

\subsection{$A_{6,1}$} 

Lastly, for $A_{6,1}$ we were only able to compute $71$ terms of the counting sequence before running out of memory. We were unable to conjecture a generating function, but differential approximation suggests that the growth rate is $16$. Although our confidence is not high, we announce the following conjecture. 

\begin{conjecture}
There exists a constant $C$ such that 
	\[ \av_n (A_{6,1}) \sim  C \cdot 16^n n^{-13/2}.
\]
\end{conjecture}

\section{Further directions}
The strong computational evidence obtained in this paper about the subexponential factor of the asymptotic growth of our sequences raises several intriguing questions. 
Answering them could shed some light on analogous problems for longer patterns as well. 

First, we saw in Sections~\ref{section:i-equals-1} and~\ref{section:further-experimental} that if $1\leq i \leq 4$, then there is strong numerical evidence to suggest
that $\av_n(A_{5,i}) \sim C_i \cdot  9^n n^{-3}$, where $C_i$ is some positive constant. This would mean that
$\av_n(A_{5,i})$ is just a linear factor larger than $\av(12\cdots (k-1))$, and therefore, by formula (\ref{Ak-eq}),  it only differs from $\av_n(A_5)$
in a constant factor. This result would be surprising on its own, and it would also imply that the generating function 
of the sequence $\av_n(A_{5,i})$ is not algebraic if $1\leq i\leq 4$.  Note that the behavior of $\av_n(A_{6,1})$ seems to be very similar. If its growth rate is indeed $C\cdot 16^n n^{-13/2}$ as we suggested at the end of Section 6, then the growth rate of that sequence is one linear factor larger than that of  the sequence $\av_n(12345)$. 

Second, in Section 6, we saw data suggesting that $\av_n(A_{5,5}) \sim C_5 \cdot 10^n n^{-4}$. We know from 
\cite{Regev} that $\av_n(1234)\sim C\cdot 9^n n^{-4}$.  Perhaps our proof of Theorem \ref{largesti} could be refined to prove this more precise asymptotic formula for $\av_n(A_{5,5})$. Such a result would imply that the generating function of the sequence is not algebraic. 

The third, and perhaps most interesting, question is the exponential growth rate of the sequence $\av_n(A_{k,1})$ for general $k$. We saw in the introduction that $L(A_{3,1})=(3-2)^2+1=2$, and $L(A_{4,1}) = 2+\sqrt{5} $, 
which is between 4 and 5, that is, the values of $(k-2)^2$ and $(k-2)^2+1$. Numerical evidence obtained in this 
paper suggests that $L(A_{5,1})=9=(5-2)^2$ and $L(A_{6,1})=16=(6-2)^2$. This raises the following intriguing question.

\begin{question} Is it true that if $k\geq 5$, 
then $L(A_{k,1})=(k-2)^2$?
\end{question}

\end{document}